\definecolor{darkblue}{rgb}{0,0,0.45}
\definecolor{darkred}{rgb}{0.6,0,0}
\definecolor{darkgreen}{rgb}{0.13,0.5,0}
\newtheorem{theorem}{Theorem}
\newtheorem{lemma}[theorem]{Lemma}
\newtheorem{observation}[theorem]{Observation}
\newtheorem{conjecture}[theorem]{Conjecture}
\newtheorem{question}[theorem]{Question}
\newcommand{\ch}{\text{ch}}
\newcommand{\brm}[1]{\operatorname{#1}}
\title{Flexibility of planar graphs without 4-cycles\thanks{This research is a part of projects that have received funding from the European Research Council (ERC) under the European Union's Horizon 2020 research and innovation programme under grant agreements No 714704.
This work was partially supported by student grants of Charles University GAUK 1277018 and SVV–2017–260452.
}
}
\author[1,2]{Tomáš Masařík}
\affil[1]{Faculty of Mathematics, Informatics and Mechanics, University of Warsaw, Poland}
\affil[2]{Department of Applied Mathematics, Charles University, Prague, Czech Republic}
\affil[ ]{\texttt{masarik@kam.mff.cuni.cz}}
\date{}
\begin{document}

\maketitle

\begin{textblock}{20}(0, 12.5)
\includegraphics[width=40px]{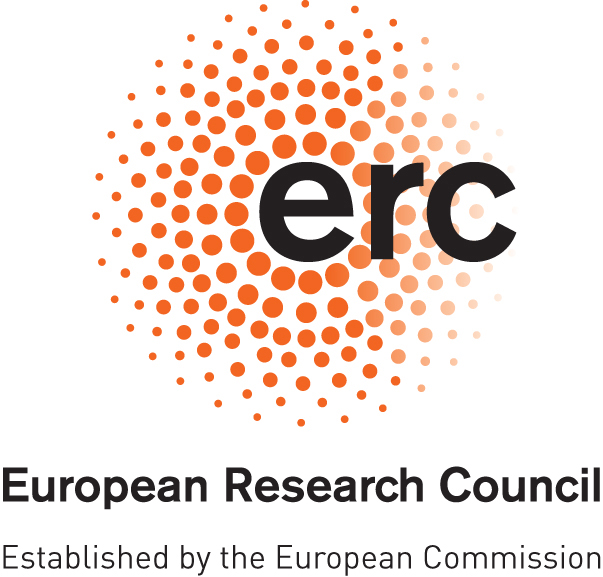}%
\end{textblock}
\begin{textblock}{20}(-0.25, 12.9)
\includegraphics[width=60px]{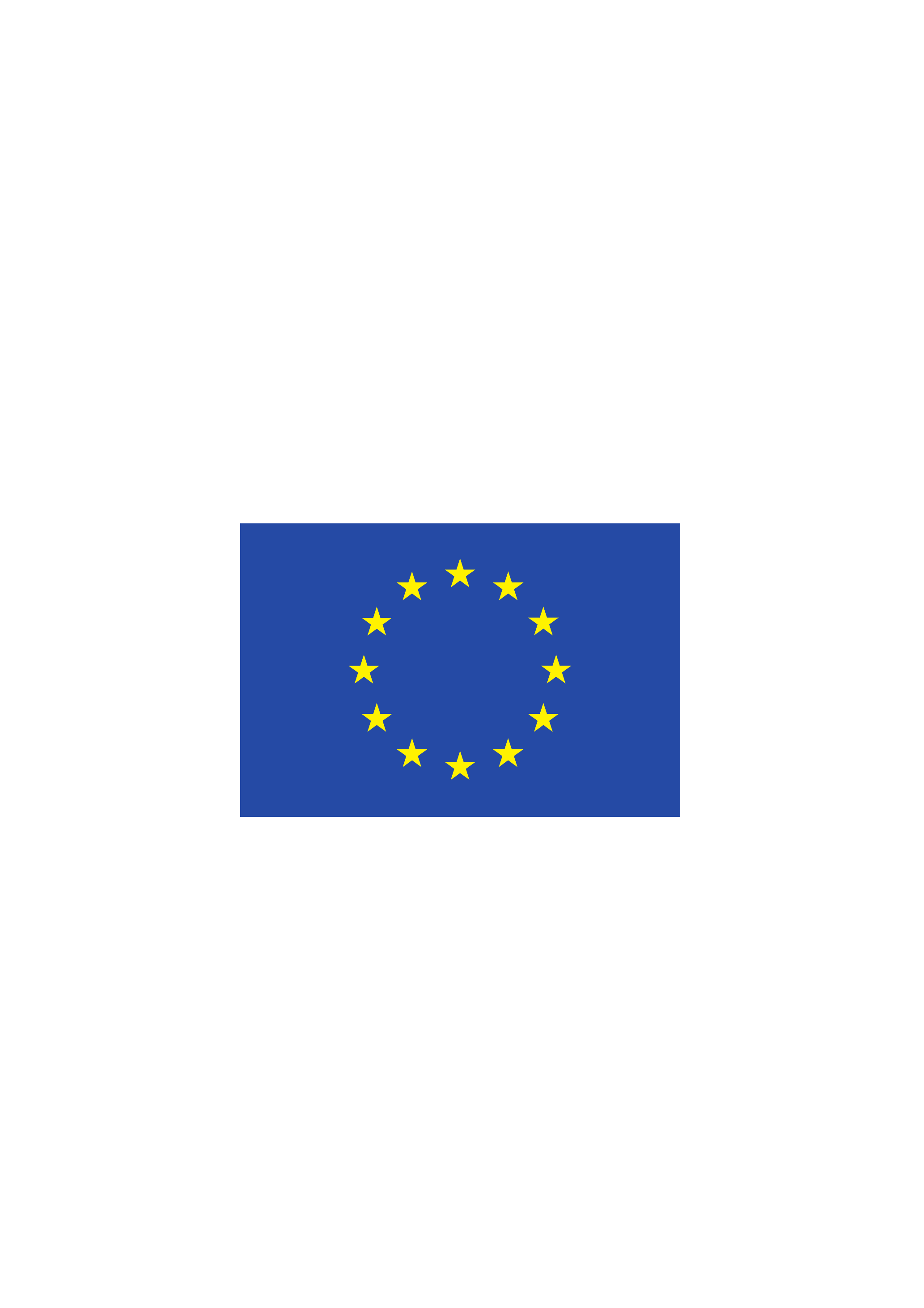}%
\end{textblock}

\begin{abstract}
Proper graph coloring assigns different colors to adjacent vertices of the graph.
Usually, the number of colors is fixed or as small as possible.
Consider applications (e.g.\ variants of scheduling) where colors represent limited resources and graph represents conflicts, i.e., two adjacent vertices cannot obtain the same resource.
In such applications, it is common that some vertices have preferred resource(s).
However, unfortunately, it is not usually possible to satisfy all such preferences.
The notion called flexibility was recently defined in [Dvořák, Norin, Postle: List coloring with requests, Journal of Graph Theory 2019].
There instead of satisfying all the preferences the aim is to satisfy at least a constant fraction of the request.

Recently, the structural properties of planar graphs in terms of flexibility were investigated.
We continue this line of research.
Let $G$ be a planar graph with a list assignment $L$.
Suppose a preferred color is given for some of the vertices.
We prove that if $G$ is a planar graph without 4-cycles and all lists have size at least five, then there exists an $L$-coloring respecting at least a constant fraction of the preferences.
\end{abstract}

\begin{keywords}
Flexibility, planar graphs, graphs without 4-cycles, discharging method.
\end{keywords}

\maketitle

%%%% the main article

\section{Introduction}

In a proper graph coloring, we want to assign to each vertex of a graph one of a fixed number of colors in such a way that adjacent vertices receive distinct colors.
Dvořák, Norin, and Postle~\cite{requests} (motivated by a similar notion considered by Dvořák and Sereni~\cite{dotri}) introduced the following graph coloring question called \emph{Flexibility}.
If some vertices of the graph have a preferred color, is it possible to properly color the graph so that at least a constant fraction of the preferences are satisfied?
As it turns out, this question is trivial in the ordinary proper coloring setting with a bounded number of colors ($k$-coloring).
The answer is always positive since we can permute the colors according to the request and therefore satisfy at least $1\over{k}$ fraction~\cite{requests}.
On the other hand, Flexibility brought about a number of interesting problems in the list coloring setting.

A \emph{list assignment} $L$ for a graph $G$ is a function that to each vertex $v\in V(G)$ assigns a set $L(v)$ of colors, and an \emph{$L$-coloring} is a proper coloring $\varphi$ such that $\varphi(v)\in L(v)$ for all $v\in V(G)$.
A graph $G$ is \emph{$k$-choosable} if $G$ is $L$-colorable from every assignment $L$ of lists of size at least $k$.
A \emph{weighted request} is a function $w$ that to each pair $(v,c)$ with $v\in V(G)$ and $c\in L(v)$ assigns a nonnegative real number.  Let $w(G,L)=\sum_{v\in V(G),c\in L(v)} w(v,c)$.
For $\varepsilon>0$, we say that $w$ is \emph{$\varepsilon$-satisfiable} if there exists an $L$-coloring $\varphi$ of $G$ such that
\[
 \sum_{v\in V(G)} w(v,\varphi(v))\ge\varepsilon\cdot w(G,L).
  \]

An important special case is when at most one color can be requested at each vertex and all such colors have the
same weight.
A \emph{request} for a graph $G$ with a list assignment $L$ is a function $r$ with $\brm{dom}(r)\subseteq V(G)$ such that $r(v)\in L(v)$ for all $v\in\brm{dom}(r)$.
For $\varepsilon>0$, a request $r$ is \emph{$\varepsilon$-satisfiable} if there exists an $L$-coloring $\varphi$ of $G$ such that $\varphi(v)=r(v)$ for at least $\varepsilon|\brm{dom}(r)|$ vertices $v\in\brm{dom}(r)$.

Note that in particular, a request $r$ is $1$-satisfiable if and only if the precoloring given by $r$ extends to an $L$-coloring of $G$.
We say that a graph $G$ with the list assignment $L$ is \emph{$\varepsilon$-flexible} if every request is $\varepsilon$-satisfiable, and it is \emph{weighted $\varepsilon$-flexible} if every weighted request is $\varepsilon$-satisfiable.

Dvořák, Norin, and Postle~\cite{requests} established the basic properties of the concept.
They prove several theorems in terms of degeneracy and maximum average degree.
For example: For every $d\ge 0$, there exists $\varepsilon>0$ such that $d$-degenerate graphs with assignment of lists of size $d+2$ are weighted $\varepsilon$-flexible.
Those results imply structural theorems for planar graphs:
\begin{itemize}
  \item There exists $\varepsilon>0$ such that every planar graph with an assignment of lists of size $6$ is $\varepsilon$-flexible.
  \item There exists $\varepsilon>0$ such that every planar graph of girth at least five with an assignment of lists of size $4$ is $\varepsilon$-flexible.
  \item There exists $\varepsilon>0$ such that every planar graph of girth at least 12 with an assignment of lists of size $5$ is weighted $\varepsilon$-flexible.
\end{itemize}
Those results prompted a number of interesting questions.
The main meta-question for planar graphs is whether such bounds can be improved to match the choosability.
Notice that choosability is a lower bound for the minimum size of lists in the statement.
Dvořák, Masařík, Musílek, and Pangrác subsequently answer two such questions.
In~\cite{req-trfree} they show that triangle-free planar graphs with an assignment of lists of size $4$ are weighted $\varepsilon$-flexible.
This is optimal since there are triangle-free planar graphs that are not 3-choosable~\cite{glebov,voigt1995}.
In~\cite{req-six} they show that planar graphs of girth at least six with an assignment of lists of size $3$ are weighted $\varepsilon$-flexible.
There is still a small gap left open since even planar graphs of girth at least 5 are 3-choosable\cite{voigt1993}.
The biggest question in this direction that is still unanswered is stated as follows.
\begin{question}\label{q:general}
Does there exist $\varepsilon>0$ such that every planar graph $G$ and assignment $L$ of lists of size five is (weighted) $\varepsilon$-flexible?
\end{question}
This would be optimal in terms of choosability~\cite{voigt1993,thomassen1995-34}.
However, (if it is true) it might be difficult to obtain such a result since even the result of Thomassen~\cite{thomassen1995-34} for choosability is very involved.
In particular, compare it to a rather easy proof~\cite{Kratochvil} for choosability of triangle-free planar graphs and still the respective result for flexibility~\cite{req-trfree} was quite technical.

In this paper, we propose a step towards answering Question~\ref{q:general} by proving the following theorem.
\begin{theorem}\label{main}
  There exists $\varepsilon>0$ such that each planar graph without 4-cycles with an assignment of lists of size five is weighted $\varepsilon$-flexible.
\end{theorem}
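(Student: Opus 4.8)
\medskip
\noindent\textbf{Proof plan.}
The plan is to combine the reduction framework of Dvořák, Norin, and Postle~\cite{requests} with a discharging argument that exploits the absence of $4$-cycles, in the spirit of the proofs for sparse planar classes in~\cite{req-trfree,req-six}. Weighted $\varepsilon$-flexibility of $G$ follows once one produces a probability distribution on $L$-colourings of $G$ whose marginals at every pair $(v,c)$ are at least $\varepsilon$, and the machinery of~\cite{requests} supplies such distributions, for a subgraph-closed class, through the following scheme. One fixes a finite list $\mathcal{L}$ of \emph{reducible configurations}: roughly, a configuration is a bounded-size subgraph $H$ such that, after the list of each of its vertices has been shrunk by the number of its neighbours outside $H$, the graph $H$ carries a distribution on proper colourings in which every vertex still attains every remaining colour with probability at least a universal constant $\gamma>0$; such an $H$ can be deleted, $G-V(H)$ coloured by induction, and the two distributions glued with only a bounded loss in the flexibility constant. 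If every $4$-cycle-free planar graph with at least one vertex contains a member of $\mathcal{L}$, then Theorem~\ref{main} holds for a sufficiently small $\varepsilon$. So, after the usual preliminary reductions, the task splits into (i) choosing and verifying a list $\mathcal{L}$ of reducible configurations, and (ii) a discharging argument showing that $\mathcal{L}$ is unavoidable.

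For part (i), take a minimal counterexample $G$. The basic lemma of~\cite{requests} --- every vertex $v$ with $\deg(v)\le |L(v)|-2$ is reducible, which underlies the degeneracy result there --- forces $\delta(G)\ge 4$; and since a triangle-free planar graph on $n$ vertices has fewer than $2n$ edges while $\delta(G)\ge 4$ forces $|E(G)|\ge 2|V(G)|$, the graph $G$ must contain triangles. The triangles are where all the difficulty sits, so the remaining members of $\mathcal{L}$ would describe a triangular face together with an excessively poor local environment --- small degrees on and around the triangle and short faces across its edges --- i.e.\ exactly the patterns the discharging below cannot pay for. Their reducibility I would establish by the gadget method: bound the number of proper colourings of the configuration, check that from every admissible shrunken list assignment every vertex can still be given every colour by some proper colouring, and deduce a uniform lower bound on the marginals. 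One must keep in mind that this is genuinely stronger than reducibility for choosability --- it is not enough that the configuration be list-colourable, one needs \emph{every} colour to remain reachable with constant probability --- so a configuration harmless for $5$-choosability need not be reducible in the present sense.

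For part (ii), give a plane graph the charges $\mu(v)=\deg(v)-4$ and $\mu(f)=\ell(f)-4$; by Euler's formula the total charge equals $4|E(G)|-4|V(G)|-4|F(G)|=-8$. Since $G$ has no $4$-cycle, every face has length $3$ or at least $5$, no two triangular faces share an edge, and hence each triangular face has its three edges incident, on the far side, to faces of length at least $5$; together with $\delta(G)\ge 4$ this means the triangular faces, each of charge $-1$, are the only objects of negative charge. I would then design discharging rules sending charge to each triangular face from the large faces across its edges and from the high-degree vertices on it --- a vertex of degree $d$ lies on at most $\lfloor d/2\rfloor$ triangular faces, so it has charge to spare once $d$ is large --- and verify that, if $G$ contains no member of $\mathcal{L}$, then every vertex and every face finishes with non-negative charge, contradicting that the total is $-8$.

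The main obstacle is the coupling between (i) and (ii). Because the total charge is only $-8$, there is essentially no slack: the discharging must funnel almost all of the surplus of the large faces and high-degree vertices into the triangular faces, so $\mathcal{L}$ has to forbid precisely the configurations for which this funnelling breaks down, while each member of $\mathcal{L}$ still has to be proved reducible. Carrying out the latter for the triangle configurations is the delicate technical core --- partly because of the requirement that every colour remain reachable with constant probability, and partly because gluing the distribution on such a configuration to an inductively obtained colouring of the rest of $G$ must be arranged so that the colours the configuration loses to its outside neighbours do not destroy the marginals --- and finding a workable compromise between a manageable family of reducible configurations and a discharging scheme that actually closes is where the bulk of the work lies.
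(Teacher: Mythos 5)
Your outline correctly identifies the general architecture the paper uses --- a finite family of bounded-size reducible configurations fed into the Dvořák--Norin--Postle machinery, unavoidability via discharging from the charges $\deg(v)-4$ and $|f|-4$ with total $-8$, minimum degree at least $4$ from the degeneracy-type reduction, and the observation that no two triangular faces share an edge. But as written it is a plan, not a proof: the two load-bearing components --- the explicit list $\mathcal{L}$ and the explicit discharging rules --- are left unspecified, and you say yourself that finding them "is where the bulk of the work lies." That is precisely the content of the theorem, so there is a genuine gap. Moreover, your guess about the shape of the configurations points in a slightly wrong direction: you predict that the members of $\mathcal{L}$ describe "a triangular face together with an excessively poor local environment," whereas the single configuration that actually makes the argument close is not anchored at a triangle at all. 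It is a vertex $v$ together with any $\deg(v)-1$ of its neighbours all having degree four (Lemma~\ref{lem:redu}); its reducibility in the (FIX)/(FORB) sense is a short greedy argument whose only delicate case uses the absence of edge-adjacent triangles to ensure that at most three colours are ever crossed out at $v$. Contrapositively, every vertex of a minimal counterexample has at least two neighbours of degree at least five, and it is this degree information --- not a local analysis of triangles --- that the discharging exploits.

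Concretely, the paper's discharging has four rules: big faces send $\tfrac15$ across each edge to an adjacent triangular face; vertices of degree at least $5$ send $\tfrac25$ to each incident triangular face and $\tfrac1{15}$ to each incident big face; and a fourth rule has big faces send an extra $\tfrac2{15}$ to each edge-adjacent \emph{poor} triangle (one with all three vertices of degree four). The point your plan does not reach is the verification that a big face can afford the fourth rule: if $f=v_1\dots v_k$ is edge-adjacent to a poor triangle on the edge $v_1v_2$, then applying Lemma~\ref{lem:redu} at $v_1$ and at $v_2$ forces $\deg(v_k)\ge 5$ and $\deg(v_3)\ge 5$, so $f$ recoups $\tfrac2{15}$ from those two vertices, and a counting of how many poor triangles a face of length $k$ can border (at most $k-4$) finishes the computation. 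Without exhibiting a reducible configuration that delivers exactly this kind of degree guarantee along large faces, the discharging cannot be closed, so the proposal as it stands does not establish the theorem. I would also note that your formalization of reducibility via colouring distributions with uniformly bounded marginals is stronger than what is needed; the paper invokes the combinatorial $(0,5)$-reducibility conditions (FIX) and (FORB) together with Lemma~\ref{lemma-redu-simpl}, which is considerably easier to verify for a concrete configuration.
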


Since planar graphs without 4-cycles are 4-choosable~\cite{chsq-free} there is a gap left open.

\section{Preliminaries}
We say that a face is \emph{edge-adjacent} to another face if both share an edge.
Since graphs we are dealing with does not contain 4-cycles they cannot contain two edge-adjacent triangles.

Let $H$ be a graph.
For a positive integer $d$, a set $I\subseteq V(H)$ is \emph{$d$-independent} if the distance between any distinct vertices of $I$ in $H$ is greater than $d$.
Let $1_I$ denote the characteristic function of $I$, i.e., $1_I(v)=1$ if $v\in I$ and $1_I(v)=0$ otherwise.
For functions that assign integers to vertices of $H$, we define addition and subtraction in a natural way, adding/subtracting their values at each vertex independently.
For a function $f:V(H)\to\mathbb{Z}$ and a vertex $v\in V(H)$, let $f\downarrow	v$ denote the function such that $(f\downarrow v)(w)=f(w)$ for $w\neq v$ and $(f\downarrow v)(v)=1$.
A list assignment $L$ is an \emph{$f$-assignment} if $|L(v)|\ge f(v)$ for all $v\in V(H)$.

Suppose $H$ is an induced subgraph of another graph $G$.
For an integer $k\ge 3$, let $\delta_{G,k}:V(H)\to\mathbb{Z}$
be defined by $\delta_{G,k}(v)=k-\deg_G(v)$ for each $v\in V(H)$.
For another integer $d\ge 0$, we say that $H$ is a \emph{$(d,k)$-reducible} induced subgraph of $G$ if
\begin{itemize}
\item[(FIX)] for every $v\in V(H)$, $H$ is $L$-colorable for every $((\deg_H+\delta_{G,k})\downarrow v)$-assignment $L$, and
\item[(FORB)] for every $d$-independent set $I$ in $H$ of size at most $k-2$, $H$ is $L$-colorable for every $(\deg_H+\delta_{G,k}-1_I)$-assignment $L$.
\end{itemize}
Note that (FORB) in particular implies that $\deg_H(v)+\delta_{G,k}(v)\ge 2$ for all $v\in V(H)$.
Intuitively, (FIX) requires that $H$ is $L'$-colorable even if we prescribe the color of any single vertex of $H$,
and (FORB) requires that $H$ is $L'$-colorable even if we forbid to use one of the colors on the set $I$.

The general version of the following lemma is implicit in Dvo\v{r}\'ak et al.~\cite{requests} and appears explicitly in~\cite{req-trfree}.

\begin{lemma}\label{lemma-redu-simpl}
For all integers $b\ge 1$, there exists $\varepsilon>0$ as follows.
If for every $Z\subseteq V(G)$, the graph $G[Z]$ contains an induced $(0,5)$-reducible subgraph with at most $b$ vertices, then $G$ with any assignment of lists of size $5$ is weighted $\varepsilon$-flexible.
\end{lemma}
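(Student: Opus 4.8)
The plan is to exhibit a single probability distribution $\mathcal{D}$ over proper $L$-colourings of $G$ such that, for some constant $\alpha=\alpha(b)>0$ and \emph{every} pair $(v,c)$ with $v\in V(G)$ and $c\in L(v)$, we have $\Pr_{\varphi\sim\mathcal{D}}[\varphi(v)=c]\ge\alpha$. Once this is established, linearity of expectation gives, for any weighted request $w$,
\[
 \mathbb{E}_{\varphi\sim\mathcal{D}}\Big[\sum_{v\in V(G)} w(v,\varphi(v))\Big]=\sum_{v,\,c\in L(v)} w(v,c)\Pr[\varphi(v)=c]\ge\alpha\cdot w(G,L),
\]
so some colouring in the support meets the threshold, and $\varepsilon:=\alpha$ works uniformly over all weighted requests.

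To build $\mathcal{D}$ I would first \emph{peel} $G$ into blocks using the hypothesis: set $Z_1=V(G)$, and for $i\ge 1$ let $H_i$ be an induced $(0,5)$-reducible subgraph of $G[Z_i]$ on at most $b$ vertices, then put $Z_{i+1}:=Z_i\setminus V(H_i)$, stopping when $Z_{m+1}=\emptyset$; thus $Z_i=V(H_i)\cup\dots\cup V(H_m)$. I would then colour the blocks in the \emph{reverse} order $H_m,H_{m-1},\dots,H_1$, using fresh independent randomness for each block. When block $H_i$ is reached, every already-coloured neighbour of a vertex $v\in V(H_i)$ lies in $Z_i\setminus V(H_i)$, so there are exactly $\deg_{G[Z_i]}(v)-\deg_{H_i}(v)$ of them; the note after (FORB) gives $\deg_{H_i}(v)+\delta_{G[Z_i],5}(v)\ge 2$, i.e.\ this count is at most $3$. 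Deleting the colours of these neighbours from $L(v)$ leaves a list $L'(v)$ with $|L'(v)|\ge(\deg_{H_i}+\delta_{G[Z_i],5})(v)$, so $L'$ is a $(\deg_{H_i}+\delta_{G[Z_i],5})$-assignment on $H_i$ and any proper $L'$-colouring of $H_i$ extends the partial colouring properly.

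The per-block colouring is a fair mixture of two modes. In \emph{fix mode} (probability $\tfrac12$) I pick a uniform $v\in V(H_i)$ and a uniform $c\in L'(v)$ and apply (FIX) to the assignment that equals $L'$ off $v$ and is $\{c\}$ at $v$ (a $((\deg_{H_i}+\delta_{G[Z_i],5})\downarrow v)$-assignment), obtaining a proper $L'$-colouring with $\varphi(v)=c$. In \emph{forbid mode} (probability $\tfrac12$) I pick a uniform set $I\subseteq V(H_i)$ with $|I|\le 3$ and, for each $u\in I$, a uniform colour $c_u\in L'(u)$ to delete, then apply (FORB)---legitimate since $d=0$ makes every such $I$ a $0$-independent set of size at most $k-2=3$---to obtain a proper $L'$-colouring with $\varphi(u)\ne c_u$ for all $u\in I$. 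Each mode produces a valid $L'$-colouring, so $\mathcal{D}$ is supported on proper $L$-colourings.

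Finally I would bound $\Pr[\varphi(v^*)=c^*]$ for a fixed pair with $v^*\in V(H_i)$. Let $N^*$ be the (at most $3$) neighbours of $v^*$ coloured before $H_i$; they are spread over at most three earlier blocks. For each such block $H_j$ consider the event that it is in forbid mode, chooses $I=N^*\cap V(H_j)$, and deletes $c^*$ at every vertex of this set where $c^*$ is still available; conditioned on the history this event has probability at least a positive constant $\rho(b)$, and on it every vertex of $N^*\cap V(H_j)$ avoids $c^*$. Chaining over the at most three blocks (their coins being independent) shows that with probability at least $\rho^3$ no neighbour in $N^*$ uses $c^*$, so $c^*\in L'(v^*)$ when $H_i$ is coloured. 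Independently, block $H_i$ is in fix mode and selects the pair $(v^*,c^*)$ with probability at least $\tfrac12\cdot\tfrac1b\cdot\tfrac15$, forcing $\varphi(v^*)=c^*$. Multiplying gives $\Pr[\varphi(v^*)=c^*]\ge\rho^3\cdot\tfrac{1}{10b}=:\alpha>0$, uniformly in $(v^*,c^*)$. I expect the main obstacle to be the probabilistic bookkeeping: keeping $\mathcal{D}$ fixed (independent of the target pair) while still extracting a uniform lower bound, and organising the conditional independence across blocks correctly. The inequality $|N^*|\le 3=k-2$ is precisely what makes (FORB) strong enough to protect the requested colour at \emph{all} previously-coloured neighbours simultaneously, and it is the crux tying the reducibility hypothesis to the conclusion.
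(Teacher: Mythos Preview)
The paper does not give its own proof of this lemma; it simply quotes it, noting that the general version is implicit in Dvo\v{r}\'ak, Norin, and Postle~\cite{requests} and stated explicitly in~\cite{req-trfree}. Your sketch is exactly the argument used in those references: peel off reducible pieces $H_m,\dots,H_1$, colour them in reverse order with fresh independent randomness, and at each block mix a (FIX)-based step (to hit a target colour) with a (FORB)-based step (to keep the target colour available through earlier blocks). The conditional lower bounds per block and the chain-rule combination you describe are precisely how the bookkeeping is carried out there, and the observation that $|N^*|\le k-2=3$ is what makes (FORB) cover all previously coloured neighbours of $v^*$ in a single block is indeed the crux.

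One small point to make explicit in a full write-up: your lower bound $1/|L'(v^*)|\ge 1/5$ (and the analogous bounds in the forbid steps) relies on the lists having size exactly five, not merely at least five. This is the intended reading of ``lists of size~$5$'' here; weighted $\varepsilon$-flexibility with a uniform $\varepsilon>0$ is actually impossible for unbounded list sizes (a single vertex with list $\{1,\dots,N\}$ and unit weight on every colour forces $\varepsilon\le 1/N$), so no generality is lost.
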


\section{Reducible configurations}

In view of Lemma~\ref{lemma-redu-simpl}, we aim to prove that every planar graph without 4-cycles contains a $(0,5)$-reducible induced subgraph with the bounded number of vertices.

\begin{observation}\label{obs-deg1}
  In any graph $G$, a vertex of degree at most $3$ forms a $(0,5)$-reducible subgraph.
\end{observation}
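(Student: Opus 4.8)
The plan is to take $H$ to be the single-vertex induced subgraph $G[\{v\}]$ and to verify (FIX) and (FORB) directly; since $H$ has no edges, each condition collapses to checking that one list is nonempty. First I would record the two relevant quantities: $\deg_H(v)=0$ because $H$ consists of the isolated vertex $v$, and $\delta_{G,5}(v)=5-\deg_G(v)\ge 2$ by the hypothesis $\deg_G(v)\le 3$. In particular $\deg_H(v)+\delta_{G,5}(v)=5-\deg_G(v)\ge 2$, which is consistent with (indeed needed for) the remark following the definition of $(d,k)$-reducibility.

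For (FIX), the only vertex to consider is $v$ itself, and the list assignment $L$ in question is a $\bigl((\deg_H+\delta_{G,5})\downarrow v\bigr)$-assignment, so $|L(v)|\ge 1$; colouring $v$ with an arbitrary element of $L(v)$ yields an $L$-colouring of $H$. For (FORB), a $0$-independent set imposes no constraint (every pair of distinct vertices is at distance greater than $0$), so $I$ ranges over all subsets of $V(H)=\{v\}$, and $|I|\le 1\le k-2=3$ is automatic; thus $I=\emptyset$ or $I=\{v\}$. In either case a $(\deg_H+\delta_{G,5}-1_I)$-assignment $L$ satisfies $|L(v)|\ge(5-\deg_G(v))-1\ge 1$, so $v$ can again be coloured. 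Hence $H$ is a $(0,5)$-reducible induced subgraph of $G$ with a single vertex.

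I do not expect any genuine obstacle here: the content of the observation is exactly the arithmetic fact that degree at most $3$ leaves a surplus of at least $2$ over the target value $5$, and this surplus survives forbidding one colour on $v$ (which is why the bound is $3$ rather than $4$). The purpose of isolating this as an observation is organisational — it disposes of all vertices of degree at most $3$, so that the discharging argument that follows may freely assume minimum degree at least $4$.
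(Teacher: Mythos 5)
Your proof is correct and is exactly the routine verification the paper omits (the Observation is stated without proof): taking $H=G[\{v\}]$, both (FIX) and (FORB) reduce to checking that the single list has size at least $1$, which follows from $5-\deg_G(v)\ge 2$. Your closing remark correctly identifies why the threshold is degree $3$ rather than $4$, namely that (FORB) must survive the subtraction of $1_I$.
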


From now on suppose that the minimum degree of $G$ is $4$.
We describe one more easy reducible configuration (see Figure~\ref{fig:redu}) that, in combination with discharging, turns out to be sufficient to derive the promised theorem.

\begin{lemma}\label{lem:redu}
  If $G$ is a planar graph without 4-cycles, then a vertex $v$ together with $\deg(v)-1$ neighbors of degree four forms a $(0,5)$-reducible configuration on $\deg(v)$ vertices.
\end{lemma}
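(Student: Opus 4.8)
The plan is to let $H$ be the induced subgraph of $G$ on $v$ together with the chosen $\deg(v)-1$ degree-four neighbours $u_1,\dots,u_t$ (so $t=\deg(v)-1$ and $|V(H)|=\deg(v)$), and to check conditions (FIX) and (FORB) by hand. The first step is a structural observation: since $G$ has no $4$-cycle, no $u_i$ can be adjacent to two other vertices among $u_1,\dots,u_t$ — that would close a $4$-cycle through $v$ — so $\{u_1,\dots,u_t\}$ induces a matching in $H$. Thus $H$ is a star centred at $v$ with a (possibly empty) matching added among its leaves, $\deg_H(u_i)\in\{1,2\}$, and a short computation gives that the weight function $g:=\deg_H+\delta_{G,5}$ satisfies $g(v)=(\deg(v)-1)+(5-\deg(v))=4$ and $g(u_i)=\deg_H(u_i)+1\in\{2,3\}$. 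Everything then reduces to list-colouring this very simple graph with these list sizes.

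For (FIX), where a single vertex $w$ has its list shrunk to one colour, I would colour $w$ first, then $v$, then the leaves, one matched pair or isolated leaf at a time. Colouring $v$ is always possible because $v$ has at least $g(v)=4$ colours available and at most one neighbour (namely $w$, if $w$ is a leaf) is already coloured. Each remaining isolated leaf keeps a list of size $\ge 2$ minus the colour of $v$, and each remaining matched pair keeps two lists of size $\ge 3$, from which the colour of $v$ (and possibly that of $w$, if $w$ is the partner) is removed; since an edge whose two lists have size $\ge 2$ is always properly list-colourable and size $\ge 1$ suffices for an isolated leaf, the colouring completes. This case is essentially immediate once the structure is known.

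The real content is (FORB). Since $d=0$, every $I\subseteq V(H)$ with $|I|\le k-2=3$ is $0$-independent, so I must treat an arbitrary $I$ of size $\le 3$, each of its vertices having its guaranteed list size dropped by one. The key idea is again to colour $v$ first. Call a colour $c\in L(v)$ \emph{bad} if committing $\varphi(v)=c$ blocks a completion; I would argue that a completion can only be blocked at the leaves and bound the number of bad colours. An isolated leaf is blocked only if its whole list is a single colour equal to $c$, which requires that leaf to lie in $I$ and costs one element of $I$ per bad colour. A matched edge $u_iu_j$ is blocked only if deleting $c$ collapses both lists to the same singleton, which forces $L(u_i)=L(u_j)=\{c,c'\}$, hence $u_i,u_j\in I$, costing two elements of $I$ for (at most) two bad colours. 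Either way the number of bad colours is at most $|I\setminus\{v\}|\le 3$, while $|L(v)|\ge 4-1_I(v)$, and $4-1_I(v)>|I\setminus\{v\}|$ exactly because $|I|\le 3$; so a good colour for $v$ exists, and colouring $v$ with it lets us extend greedily over the leaves as in (FIX).

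I expect the only genuine obstacle to be the bookkeeping around matched pairs in (FORB): one has to notice that a matched pair causes trouble only when \emph{both} of its vertices lie in $I$ and share the same two-element list, and then forbid precisely those two colours for $v$. The naive ordering ``colour all leaves, then $v$'' fails because the leaves could collectively exhaust $L(v)$; organising the argument as ``choose $\varphi(v)$ avoiding a bounded set of bad colours, then extend'' sidesteps this, and the rest is routine degree and list counting.
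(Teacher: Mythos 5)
Your proof is correct and takes essentially the same route as the paper's: observe that the absence of 4-cycles forces the degree-four neighbours to induce a matching, verify (FIX) by colouring the fixed vertex, then $v$, then the leaves, and verify (FORB) by charging each ``bad'' colour for $v$ to a distinct element of $I\setminus\{v\}$ (one per isolated leaf in $I$, two per fully-forbidden matched pair), so that $|L(v)|\ge 4-1_I(v)>|I\setminus\{v\}|$ leaves a good choice for $v$ and the rest extends greedily. Your bookkeeping is more explicit than the paper's terse version, but the underlying argument is identical.
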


\begin{proof}
  (FIX): If vertex $v$ has fixed color then we have enough remaining colors on its neighbors to complete the coloring.
  If any other vertex $v'$ is fixed then it crosses out one color from $v$ and in case $v'$ has a neighbor in $N(v)$ it also crosses out one of its colors. In both cases, we can set a color of $v$ and complete the coloring greedily.

  (FORB): Observe that if we forbid a color of a vertex $v'$ that is not adjacent to any vertex in $N(v)$ then its color is determined and therefore it crosses out one color of $v$.
  The same effect has a forbidden color of $v$.
  If we forbid a color of a vertex $v_1$ such that it forms a triangle $v,v_1,v_2$ then it does not force anything unless vertex $v_2$ has also a forbidden color.
  In the latter case two colors are crossed out from the list of $v$.
  Keep in mind that this cannot happen twice since there are no two edge-adjacent triangles.
  Since only three colors are removed from the list of $v$, we can color $v$ and then the rest of the graph greedily to conclude the proof.
\end{proof}

 \begin{figure}
  \centering
   \includegraphics{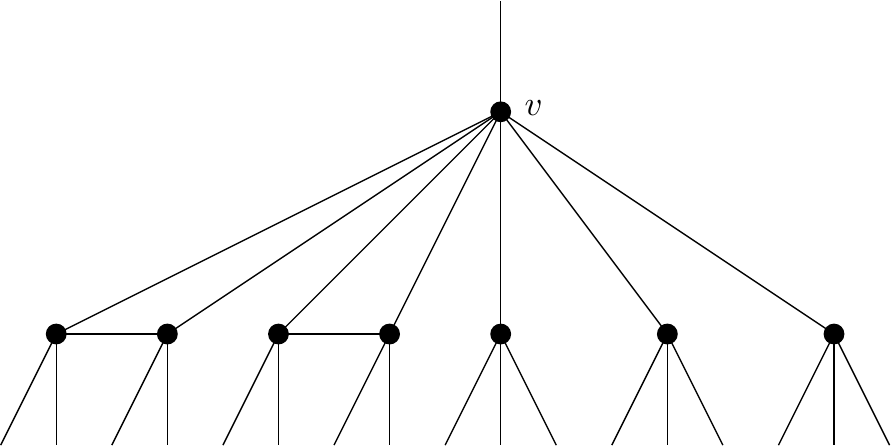}
   \caption{A reducible configuration in Lemma~\ref{lem:redu}.}\label{fig:redu}
 \end{figure}

\section{Discharging}

Let us assign charge $\ch_0(v)=\deg(v) - 4$ to each vertex $v\in V(G)\setminus V(C)$ and charge $\ch_0(f)=|f|-4$ to each face $f$ of $G$, where $|f|$ denotes the length of the facial walk of $f$.
By Euler's formula, we have $\sum_{v\in V(G)} \ch_0(v)+\sum_{f\in F(G)}\ch_0(f)=(2|E(G)|-4|V(G)|)+(2|E(G)|-4|F(G)|)=4(|E(G)|-|V(G)|-|F(G)|)=-8$.

Note that only triangle-faces have a negative charge before any redistribution of the charge.
We redistribute the initial charge according to the following rules.
\begin{itemize}
  \item[(R1)] For each face $f$ of $G$ if $|f|\ge5$ then $f$ sends $1\over5$ to any edge-adjacent triangle-face.
  \item[(R2)] For each vertex $v$ of $G$ if $\deg(v)\ge 5$ then $v$ sends $2\over5$ to each adjacent triangle-face.
  \item[(R3)] For each vertex $v$ of $G$ and each its incident face $f$ if $\deg(v)\ge 5$ and $|f|\ge5$ then $v$ sends $1\over 15$ to $f$.
\end{itemize}

Observe that the charge of any face $|f|\ge 5$ does not drop below zero by Rule (R1).
Any vertex $v$ of degree at least 5 with $t$ incident triangle-faces sends at most ${2t}\over{5}$ by Rule (R2) and ${(\deg(v)-t)}\over{15}$ by Rule (R3).
View that the number of incident triangle-faces for a single vertex is at most $\lfloor\frac{\deg(v)}{2}\rfloor$ because there are not any edge-adjacent triangle-faces.
Therefore $t\le\lfloor\frac{\deg(v)}{2}\rfloor$.
It follows that $\ch(v)\ge 0$.

It remains to argue that all triangle-faces obtain enough of the charge.
Each of them receives the charge at least $3\over5$ by Rule (R1).
If one of its vertices has degree at least five we are done by Rule (R2).
Therefore all of them have degree exactly four.
We call such triangle-face \emph{poor}.

We do one more redistribution of charge.

\begin{itemize}
  \item[(R4)] For each poor triangle-face $f$ of $G$ and for each edge-adjacent face $f'$ if $|f'|\ge5$ then $f'$ sends ${2}\over{15}$ to $f$.
\end{itemize}

A Combination of Rules (1) and (4) yields that poor triangle-faces have a positive charge.
They obtain $3\over5$ by Rule (1) and three times $2\over{15}$ by Rule (4).
Finally, we show that the charge of larger faces remains non-negative after the application of Rule (R4).

Consider face $f=v_1,\ldots v_k$ of length at least five edge-adjacent to some triangle-face $f_t=v_1,v_2,v'$.
Recall that $\deg(v_1)=\deg(v_2)=\deg(v')=4$.
By Lemma~\ref{lem:redu} applied on vertex $v_1$ we claim that $\deg(v_k)\ge5$.
By the same argument repeated on vertex $v_2$ we derive $\deg(v_3)\ge5$.
Therefore, by Rule (R3) $f$ receive charge at least $2\over{15}$ from each $v_3$ and $v_k$.
This, combined with an observation that $f$ has at most $|f|-4$ edge-adjacent poor triangle-faces, yields the promised claim for $|f|=5$.
Larger faces sent only ${|f|\over5}$ by Rule (R1) altogether and therefore they can pay an additional ${2(|f|-5)}\over{15}$.
\[
{{|f|}\over5}+ {{2(|f|-5)}\over{15}}=
%{{3|f|+2|f|-10}\over{15}}=
{{|f|-2}\over{3}}
\le |f|-4.
\]

This is a contradiction with the original negative assignment of charge and therefore we derive Theorem~\ref{main}.

\section{Conclusions}
We proved that planar graphs without 4-cycles are weighted $\varepsilon$-flexible for lists of size at least five.
This is a middle step to answer Question~\ref{q:general} that might be challenging as mentioned in the introduction.
Based on the proof possible difficulties we suggest, as a next step to prove the conjecture, to inspect first planar graphs without diamonds ($K_4-e$).

\begin{conjecture}\label{q:new}
There exists $\varepsilon>0$ such that every planar graph $G$ without diamonds and assignment $L$ of lists of size five is (weighted) $\varepsilon$-flexible.
\end{conjecture}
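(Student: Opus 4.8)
The plan is to keep the framework of Lemma~\ref{lemma-redu-simpl} and Lemma~\ref{lem:redu}, and to prove that every diamond-free planar graph contains a bounded-size $(0,5)$-reducible induced subgraph. The first observation is that the only property of the graph used in the proof of Lemma~\ref{lem:redu} is the absence of two edge-adjacent triangles, which is exactly diamond-freeness (a pair of edge-adjacent triangles is a copy of $K_4-e$). Hence Observation~\ref{obs-deg1} and Lemma~\ref{lem:redu} hold verbatim for diamond-free graphs, and as in the proof of Theorem~\ref{main} we may assume $G$ has minimum degree $4$ and contains no degree-$4$ vertex with three neighbors of degree $4$. The essential new feature, compared to the $4$-cycle-free setting, is that $G$ may now contain $4$-faces; these carry initial charge $|f|-4=0$, so they can neither absorb nor donate charge freely, and---crucially---a triangle may now be edge-adjacent to a $4$-face rather than to a face of length at least five.

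The key new reducible configuration controls exactly these $4$-faces. I claim that a $4$-cycle all of whose vertices have degree $4$ is $(0,5)$-reducible. Indeed, in a diamond-free graph a $4$-cycle has no chord (a chord would create a $K_4-e$), so it induces a copy of $C_4$; with $\delta_{G,5}(v)=1$ and $\deg_H(v)=2$ we get $\deg_H+\delta_{G,5}\equiv 3$, and since $C_4$ is $2$-choosable both (FIX) (lists of size $3$ with one vertex of size $1$) and (FORB) (lists of size $3$ reduced to size $2$ on a set of at most three vertices) follow immediately. Thus we may additionally assume that every $4$-face of $G$ carries a vertex of degree at least five, which is the analogue of the high-degree vertices that the $4$-cycle-free proof extracted from the long faces surrounding each triangle.

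With these configurations excluded, I would set up the same charges $\ch_0(v)=\deg(v)-4$ and $\ch_0(f)=|f|-4$ and redesign the discharging rules so that the surplus of high-degree vertices, rather than of long faces alone, reaches the triangles. Faces of length at least five still send charge to edge-adjacent triangles and receive a small amount from their incident high-degree vertices, as in (R1) and (R3). For a triangle $T$ edge-adjacent to a $4$-face, the charge must instead come from vertices: if $T$ has an incident vertex of degree at least five it is paid directly, and if $T$ is poor then---applying Lemma~\ref{lem:redu} to each of its degree-$4$ vertices exactly as in the discharging of Theorem~\ref{main}---its two non-triangle neighbors are forced to have degree at least five, and these forced high-degree neighbors, which lie on the faces edge-adjacent to $T$, send charge inward to $T$. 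One then verifies that no $4$-face drops below zero, using that each $4$-face owns a guaranteed vertex of degree $\ge 5$, and that high-degree vertices never over-donate.

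The main obstacle is the bookkeeping for high-degree vertices that are \emph{shared} by several needy triangles and $4$-faces. In the $4$-cycle-free case each long face had enough length to pay all its incident triangles; here a single degree-$5$ vertex may simultaneously be the forced high-degree neighbor of two poor triangles and the guaranteed high-degree vertex of several $4$-faces, so a naive rule can exhaust its charge of $\deg(v)-4$. Resolving this will, I expect, require a small family of further bounded reducible configurations---for instance forbidding a degree-$4$ vertex that lies on two triangles, or a triangle sharing a degree-$4$ vertex with an incident $4$-face---so as to bound how many demands any high-degree vertex must meet, together with a correspondingly fine-tuned multi-rule discharging. Keeping every such configuration of bounded size (as required by Lemma~\ref{lemma-redu-simpl}) while still covering all interlocking triangle/$4$-face patterns is the crux, and is precisely the kind of case analysis that makes results at list size five delicate, as already noted for Thomassen's theorem in the introduction.
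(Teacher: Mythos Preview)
The statement you are attempting is Conjecture~\ref{q:new}, which the paper explicitly poses as an \emph{open problem}; there is no proof of it in the paper to compare against. The paper suggests diamond-free planar graphs as a natural next target precisely because the argument for Theorem~\ref{main} does not extend to them.

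Your write-up is not a proof but a proof plan, and you correctly isolate where it breaks. The observation that Lemma~\ref{lem:redu} uses only the absence of edge-adjacent triangles (i.e.\ diamond-freeness) is sound, and your additional configuration---an induced $4$-cycle on four degree-$4$ vertices---is indeed $(0,5)$-reducible for the reasons you give. But as you yourself say, once $4$-faces are present the discharging no longer closes: a triangle may border a $4$-face rather than a long face, and a single degree-$5$ vertex can be the designated payer for several poor triangles and several $4$-faces at once, exhausting its charge of $1$. You propose to fix this with ``a small family of further bounded reducible configurations'' and ``fine-tuned multi-rule discharging,'' but you neither specify those configurations nor verify that any choice of rules balances; the argument is incomplete exactly at the point where the real difficulty lies. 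That is consistent with the paper's assessment that this case is a genuine open problem, not a routine extension of Theorem~\ref{main}.
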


Another possible direction is closing the gap between flexibility and choosability for planar graphs without 4-cycles.

\begin{question}\label{q:c4}
Does there exists $\varepsilon>0$ such that every planar graph $G$ without 4-cycles and assignment $L$ of lists of size four is (weighted) $\varepsilon$-flexible?
\end{question}

\section{Acknowledgments}
  I would like to thank Zdeněk Dvořák for helpful comments.

\end{document}